\newtheorem{theorem}{Theorem}[section]
\numberwithin{equation}{section}  
\begin{document}

\title[Whitney Forms in Spacetime]
      {Geometric Representations of Whitney Forms and their Generalization to Minkowski Spacetime}

\author[J. Salamon]{Joe Salamon}
\email{jsalamon@physics.ucsd.edu}
\author[J. Moody]{John Moody}
\email{jbmoody@math.ucsd.edu}
\author[M. Leok]{Melvin Leok}
\email{mleok@math.ucsd.edu}

\address{Department of Mathematics\\
         University of California San Diego\\ 
         La Jolla CA 92093}

\date{\today}

\keywords{Whitney forms, finite element exterior calculus, spacetime}

\begin{abstract}
In this work, we present two alternative yet equivalent representation formulae for Whitney forms that are valid for any choice of coordinates, and generalizes the original characterization of Whitney forms in \cite{Wh57} that requires the use of barycentric coordinates. In addition, we demonstrate that these formulae appropriately generalize the notion of Whitney forms and barycentric coordinates to Minkowski spacetime, and naturally to any other flat pseudo-Riemannian manifold. These alternate forms are related to each other through a duality between the exterior algebras on vectors and covectors. In addition, these two formulae have a geometrically intuitive interpretation which provide interesting insights into their structure. Furthermore, we obtain an explicit characterization of the Hodge dual of the space of Whitney forms on Minkowski spacetime, and this opens the door to treating the theory of classical electromagnetism in a fully covariant fashion through the combination of multisymplectic variational integration and spacetime finite-element exterior calculus (FEEC) techniques. We conclude the paper with the results from an $\mathbb{R}^{1+1}$ wave equation simulation, as a proof-of-concept of the spacetime formulation described herein.
\end{abstract}

\maketitle

{\footnotesize
\setcounter{tocdepth}{1}
\tableofcontents
}
\vspace*{-0.5cm}

\section{Introduction}
The finite element community has long made use of the notion of N{\'e}d{\'e}lec edge elements \cite{Ned80} and Raviart--Thomas elements \cite{RaTh1977} in the context of mixed finite-element methods. It was later made clear by Bossavit in \cite{Bos88} that these edge elements correspond exactly to the barycentric differential forms Whitney used in his classic work \cite{Wh57}. It is an interesting historical coincidence that Whitney's ``elementary forms", which were used as a coordinate-free tool for proving theorems on simplicial meshes, would arise in a computational technique developed decades later. In retrospect, this is not surprising, as the de Rham complex of differential forms provides the appropriate unified geometrical setting for formulating mixed finite-element methods. This connection was initially used to develop generalizations of edge elements in \cite{Hip02}, and a more expansive theory of finite-element exterior calculus was developed in \cite{AFWActa, AFW10}.

In this paper, we will generalize the notion of Whitney forms through the use of differential geometry to flat, pseudo-Riemannian manifolds, and show that our formulations are equivalent to that of Whitney's original notion of these forms in Euclidean spaces. Furthermore, these formulations will provide an explicit characterization of the Hodge dual of Whitney forms on flat, pseudo-Riemannian manifolds.

An important motivation for this work is the desire to apply finite-element exterior calculus to multisymplectic variational integration of Hamiltonian field theories, and to go beyond spacetime tensor-product meshes to spacetime simplicial meshes instead. This provides a natural setting to explore the potential benefits of combining compatible discretization techniques with geometric numerical integration techniques.

\subsection*{Exterior algebra of vector spaces}
Before proceeding, we briefly summarize the necessary definitions and basic results from the theory of exterior algebras in order to set our notation.
Let $V$ be a vector space of dimension $n$, and let $\Lambda(V)$ be the exterior algebra of $V$, with $\cdot \wedge \cdot : \Lambda(V) \times \Lambda(V) \rightarrow \Lambda(V)$  denoting the exterior product.
$\Lambda^k(V)$ will be the span of $v_1 \wedge v_2 \wedge \dots \wedge v_k$ over all $v_1, v_2, \dots , v_k \in V$ (by convention, we set $\Lambda^0(V) = \mathbb{R}$).
Let $\langle\cdot,\cdot\rangle : V \times V \rightarrow \mathbb{R}$ be a pseudo-Riemannian metric -- a non-degenerate bilinear mapping.
The flat operation ${\cdot}^{\flat}$, usually described as the lowering of indices through the metric tensor, is defined by $v^\flat\in V^{*}$ such that $v^\flat(w)=\langle v , w \rangle$.
For $\lambda \in V^*$, let $i_\lambda(v)=\lambda(v)$, where $v\in V$, denote  the contraction operation, and let $i_{\lambda}(v_1 \wedge v_2)=(i_\lambda v_1) \wedge v_2 + (-1)^{k_1} v_1 \wedge (i_\lambda v_2)$ for any $v_1 \in \Lambda^{k_1}(V)$ and $v_2 \in \Lambda^{k_2}(V)$.
The inner product defined above extends to the exterior algebra by $\langle v_1 \wedge v_2 , v_3 \rangle = \langle v_2 , i_{v_1^\flat} v_3 \rangle$.

We define a volume form as any $Vol \in \Lambda^n (V)$ with $\langle Vol , Vol \rangle = 1$.
The operation $\star: \Lambda^k \rightarrow \Lambda^{n-k}$ which takes $w \rightarrow i_{w^b} Vol$ is called the Hodge star.
The Hodge star is also characterized by the identity $w \wedge (\star v) = \langle w , v \rangle Vol$.

\subsection*{Exterior calculus of differential forms} When we consider the exterior algebra of differential forms, we can construct an exterior calculus by introducing the exterior derivative,  $d:\Lambda^k\rightarrow\Lambda^{k+1}$, which satisfies the usual properties:
\begin{enumerate}
\item $df$ is the differential of a smooth function $f\in\Lambda^0(V)$;
\item $d(dv) = 0$, for any form $v\in\Lambda^k(V)$, i.e., $d^2=0$;
\item $d(v\wedge w)=(dv)\wedge w + (-1)^k v\wedge(dw), \forall v\in\Lambda^k(V), w\in\Lambda^j(V)$.
\end{enumerate}

Lastly, the operation $\delta:\Lambda^k\rightarrow\Lambda^{k-1}$ is the codifferential, defined as the combination $\delta = (-1)^{nk+n+1}\star d \star$.
Note that the first and second Hodge stars in this operator act on different spaces.

As a final note, we will use the following definition for the Whitney forms:
\begin{equation}
{}^{j}w_{\rho}(x)=j!\sum_{i=0}^j (-1)^i \lambda_i(x) d\lambda_0\wedge d\lambda_1\dots \widehat{d\lambda_i}\dots d\lambda_j, \label{whitney_coordinate}
\end{equation}
where $\rho=[v_0,v_1,\dots v_j]$ is a standard, ordered subsimplex of $\sigma=[v_0,\dots,v_n], (j\leq n)$, $\lambda_i$ is the barycentric coordinate associated with the vertex $v_i$, and the hat indicates an omitted term within the wedge product.

In the next section, we will first consider the exterior algebra of covectors, i.e., differential forms in generalizing the Whitney forms. We will then consider the exterior algebra of vectors to construct a dual, yet equivalent, generalization.

\section{Whitney Forms: the Covector Approach}
The basic properties of Whitney forms lead to a natural and geometrically intuitive construction in terms of the coordinate vertex vectors of a given simplex. We will first showcase this representation, then turn to a discussion of its derivation. For the remainder of this section, we will use the notation ${}^{j}w_{\rho}$ to denote a Whitney $j$-form over the oriented set of $j+1$ vertices of a $j$-simplex $\rho$.
\begin{theorem}\label{covector_whitney_form}
Let $\sigma := [v_0,v_1,...v_n]$, an ordered set of vertex vectors, represent an oriented $n$-simplex on a flat $n$-dimensional manifold, and correspondingly, let $\rho :=[v_i,v_{i+1}...v_{i+j}]$ represent a subsimplex of $\sigma$, with $i+j\leq n$. Taking $\tau=\sigma\smallsetminus\rho=[v_0,...v_{i-1},v_{i+j+1},...v_n]$, the ordered complement of $\rho$ in $\sigma$, the Whitney $j$-form over $\rho$ can be written as 
\begin{equation}
{}^{j}w_{\rho} (x) = \frac{sgn(\rho\cup\tau)}{\star vol(\sigma)}\frac{j!}{n!}\Bigl(\star\bigwedge_{v_k\in\tau}{(v_k - x)}^{\flat}\Bigr),\label{vertexrep}
\end{equation}
where $vol(\sigma)$ is the volume form of $\sigma$, defined by $vol(\sigma)=\frac{1}{n!}\bigwedge_{i=1}^n(v_i-v_0)^{\flat}$, $sgn(\rho\cup\tau)$ is the sign of the permutation of the ordered vertex set $\rho\cup\tau$ relative to $\sigma$, and $x$ is the position vector. The terms in the wedge product are ordered as in $\tau$.
\end{theorem}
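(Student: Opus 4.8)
The plan is to establish the equivalence between the proposed geometric formula (\ref{vertexrep}) and the classical barycentric definition (\ref{whitney_coordinate}). The cleanest route is to show that both expressions agree as differential forms by verifying they produce the same result when evaluated, exploiting the defining characterizations of the Hodge star and the structure of barycentric coordinates on a flat manifold. The key observation that makes this tractable is that on a flat simplex, the barycentric coordinates $\lambda_i$ are affine functions of $x$, so the differentials $d\lambda_i$ are constant covectors, and moreover these differentials are precisely dual (up to normalization by the volume) to the edge vectors of the simplex.

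\textbf{Key steps.} First I would express the barycentric coordinates explicitly. Since $\lambda_i(x)$ is affine with $\lambda_i(v_k)=\delta_{ik}$ and $\sum_i \lambda_i = 1$, one can write each $\lambda_i$ and its differential $d\lambda_i$ in terms of wedge products of the complementary edge vectors divided by $n!\,vol(\sigma)$. Concretely, I expect $d\lambda_i$ to be expressible via a Hodge star of the wedge of all vertex-difference covectors except the one indexed by $i$, which is exactly the type of object appearing in (\ref{vertexrep}). Second, I would substitute these expressions into the right-hand side of the classical formula (\ref{whitney_coordinate}) and collect terms. Third, I would compute $\bigwedge_{v_k\in\tau}(v_k-x)^\flat$ directly: writing $v_k - x = (v_k - v_0) - (x - v_0)$ and using multilinearity, the wedge expands into a sum where the barycentric structure re-emerges, because $x = \sum_\ell \lambda_\ell(x) v_\ell$ lets us replace $x$ with a barycentric combination of vertices. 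Finally, applying the Hodge star and matching the sign $sgn(\rho\cup\tau)$ and the combinatorial factor $\tfrac{j!}{n!}$ against the normalization $\tfrac{1}{\star vol(\sigma)}$ should reproduce (\ref{whitney_coordinate}) term by term.

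An alternative and perhaps more elegant route avoids substituting into the classical formula altogether: one can instead verify that the right-hand side of (\ref{vertexrep}) satisfies the three defining properties that characterize Whitney forms uniquely — namely that $^j w_\rho$ restricts correctly on subsimplices, has the correct degree, and integrates to the Kronecker delta $\delta_{\rho\rho'}$ over simplices $\rho'$ (the partition-of-unity and duality properties). Since Whitney forms are determined by these incidence relations, showing the geometric formula obeys them gives equivalence without a coordinate grind. I would likely present the direct computation as the main argument and remark on this characterization as a check.

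\textbf{The main obstacle} will be bookkeeping the signs. The factor $sgn(\rho\cup\tau)$ encodes how the reordering of $\sigma$ into $\rho$ followed by $\tau$ permutes the vertices, and this must be reconciled with (i) the sign $(-1)^i$ appearing in the alternating sum of (\ref{whitney_coordinate}), (ii) the signs introduced by the Hodge star when acting on a $(n-j)$-form built from the $\tau$-covectors, and (iii) the antisymmetry of the wedge product as terms are reordered. Getting these three sources of sign to combine into the single clean prefactor $sgn(\rho\cup\tau)$ is the delicate part; I expect it to hinge on carefully tracking the orientation induced by the splitting $\sigma = \rho \cup \tau$ and using the identity $w\wedge(\star v)=\langle w,v\rangle\, Vol$ to pin down the Hodge-star sign conventions consistently.
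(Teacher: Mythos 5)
Your proposal is correct in outline, but it follows a genuinely different route from the paper, which in fact gives two proofs of Theorem \ref{covector_whitney_form}. The paper's main-text argument goes through the vector proxy of Theorem \ref{main_theor}: by multilinearity one reduces to $x=v_k$ a vertex of $\rho$, then tests both sides against a basis of $j$-multivectors $U=\bigwedge_i(v_{\sigma(i)}-v_k)$, where every term but one is killed because $V_\tau\wedge U=0$ whenever the test simplex meets $\tau$; the covector formula \eqref{vertexrep} then follows from the proxy identity by a short chain of algebraic steps ($\langle Vol(\sigma),V_\tau\wedge U\rangle=\langle i_{V_\tau}vol(\sigma),U^\flat\rangle$ and $i_{V_\tau}Vol=\star(V_\tau^\flat)$). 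The appendix gives a second proof closer in spirit to your ``characterization'' alternative: vanishing of ${}^jw_\rho$ on the span of $\tau$, linearity in $x$, and antisymmetry force the ansatz ${}^jw_\rho=C_{\sigma,j}\,sgn(\rho\cup\tau)\star\bigwedge_{v_k\in\tau}(v_k-x)^\flat$, after which the constant is fixed by the \emph{single} normalization $\int_\rho{}^jw_\rho=1$, proved by induction on $j$ (base cases $j=0,1$ computed explicitly) using the decomposition ${}^{l+1}w_{[v_0,\ldots,v_{l+1}]}=(l+1)\,{}^lw_{[v_0,\ldots,v_l]}\wedge{}^1w_{[v_l,v_{l+1}]}/{}^0w_{[v_l]}$ and the contraction identity $i_{v^\sharp}u=(\star\star)\star(\star u\wedge v)$. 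Your main route --- explicit formulas for the $d\lambda_i$ as Hodge duals of complementary edge-covector wedges, or equivalently expanding $\bigwedge_{v_k\in\tau}(v_k-x)^\flat$ with $x=\sum_\ell\lambda_\ell(x)v_\ell$ (linearity in $x$ holding because any term with two factors of $x^\flat$ vanishes) and matching \eqref{whitney_coordinate} term by term --- is a third, more computational path. It avoids both the vector-proxy detour and the induction, at the price of exactly the sign and recombination bookkeeping you flag: the wedge of $j$ Hodge-dualized $(n-1)$-forms must be collapsed into the Hodge dual of a single $(n-j)$-form via repeated contractions, which is the step the paper's proofs are engineered to sidestep (the proxy proof localizes at vertices so only one term survives; the appendix derives the shape of the formula first and then needs only one scalar integral).

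One caution on your alternative characterization argument: the functionals $u\mapsto\int_{\rho'}u$ over the $j$-faces $\rho'$ are unisolvent only on the span of the Whitney forms (the trimmed space), not on the full space of $j$-forms with affine coefficients --- there are $\binom{n+1}{j+1}$ such faces but $(n+1)\binom{n}{j}$ independent affine forms --- so the Kronecker-delta integrals alone do not pin down the right-hand side of \eqref{vertexrep} unless you first prove it lies in that span, or make the trace conditions you allude to precise and verify unisolvency of the enlarged set of conditions. As a sanity check this route is fine, but as stated it is not yet a proof; your direct computation should remain the main argument.
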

Note the interesting structure: the Whitney form, a metric-independent object, depends on the \emph{Hodge dual of its complementary simplex}, up to metric-dependent details. In fact, ${}^{j}w_{\rho}(x)=0,\forall x\in\tau$. We now prove the validity of this formula through the use of a vector proxy representation of the Whitney forms. A more direct (but possibly less intuitive) proof is presented in the appendix.

\section{Whitney Forms: the Vector Approach}

Whitney forms can also be obtained by working directly with the exterior algebra on the vector space, and using the inner product to extend the metric to this graded algebra.

Let $\sigma$ be an $n$-simplex, described by an ordered set of vertices $[v_0,v_1,\ldots, v_n]$, with volume multivector $Vol(\sigma)=(vol(\sigma))^{\sharp}=\frac{1}{n!}\bigwedge_{i=1}^{n}{(v_i - v_0)}$. Define the multivector
\begin{equation*}
V_{\rho} = \bigwedge\limits_{v_i\in\rho}(v_i-x),
\end{equation*}
where $x$ is the position vector, and $\rho$ is an ordered subsimplex of $\sigma$ and the terms in the wedge product are to be taken in the same order as $\rho$. If $\rho$ is taken to be a $j$-subsimplex of $\sigma$, then $V_{\rho}$ represents the geometric $(j+1)$-volume multivector of $\rho$ and $x$, with $x$ taken as the origin. Then, we have:

\begin{theorem}\label{main_theor}
Let $x$ be a point in the $n$-simplex. Given a $j$-multivector $U=\bigwedge_{i=1}^{j}u_i$, the evaluation of Whitney $j$-form ${}^{j}w_{\rho}$ on $U$ is given by
\begin{equation}
{}^{j}w_{\rho}(x)[U]=sgn(\rho\cup\tau)\frac{j!}{n!}\frac{\left\langle Vol(\sigma),V_{\tau}\wedge U\right\rangle}{\langle Vol(\sigma), Vol(\sigma)\rangle},
\end{equation}
where $\tau=\sigma\smallsetminus\rho$.
\end{theorem}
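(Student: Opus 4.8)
The plan is to derive this formula directly from the already-established covector representation \eqref{vertexrep} of Theorem \ref{covector_whitney_form} by pairing both sides with the $j$-multivector $U$, exploiting exactly the vector--covector duality on which the two approaches are built. Since \eqref{vertexrep} writes $^{j}w_{\rho}(x)$ as a scalar multiple of the $j$-covector $\star\bigl(\bigwedge_{v_k\in\tau}(v_k-x)^{\flat}\bigr)$, it suffices to evaluate this covector on $U$ and to reorganize the result into the claimed inner-product quotient.

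First I would use the fact that the natural evaluation of a $j$-covector on a $j$-vector is computed by the metric: for any $\alpha\in\Lambda^{j}(V)$ one has $\alpha^{\flat}[U]=\langle\alpha,U\rangle$, since on decomposables both sides equal $\det(\langle\alpha_a,u_b\rangle)$. Combined with the naturality of the Hodge star under the musical isomorphisms (that $\star$ commutes with $\flat$ and $\sharp$, which is immediate from the definition $\star w=i_{w^{\flat}}Vol$), this rewrites $\bigl(\star\bigwedge_{v_k\in\tau}(v_k-x)^{\flat}\bigr)[U]=\langle\star V_{\tau},U\rangle$, where now $\star$ is the Hodge star on the exterior algebra of vectors and $V_{\tau}=\bigwedge_{v_k\in\tau}(v_k-x)\in\Lambda^{n-j}(V)$.

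Next I would convert $\langle\star V_{\tau},U\rangle$ into a top-degree inner product. Applying the characterizing identity $w\wedge\star v=\langle w,v\rangle\,Vol$ with $w=U$ and $v=\star V_{\tau}$, and using $\star\star=(-1)^{(n-j)j}s\,\mathrm{id}$ on $\Lambda^{n-j}$ to undo the double star, gives $\langle\star V_{\tau},U\rangle\,Vol=s\,V_{\tau}\wedge U$ after the reordering $U\wedge V_{\tau}=(-1)^{j(n-j)}V_{\tau}\wedge U$, the two degree-dependent signs multiplying to $+1$. Since $\Lambda^{n}(V)$ is one-dimensional I then write $V_{\tau}\wedge U=\langle V_{\tau}\wedge U,Vol\rangle\,Vol$ and introduce the simplex volume multivector via $Vol(\sigma)=c\,Vol$. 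The prefactor $1/\!\star\! vol(\sigma)$ in \eqref{vertexrep} equals $1/(cs)$ by the same naturality argument applied to the top covector $vol(\sigma)=Vol(\sigma)^{\flat}$, and the identity $\langle Vol(\sigma),V_{\tau}\wedge U\rangle/\langle Vol(\sigma),Vol(\sigma)\rangle=\langle V_{\tau}\wedge U,Vol\rangle/(cs)$ lets the normalization $c$ and the signature factor $s$ cancel, leaving precisely $sgn(\rho\cup\tau)\frac{j!}{n!}\frac{\langle Vol(\sigma),V_{\tau}\wedge U\rangle}{\langle Vol(\sigma),Vol(\sigma)\rangle}$.

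I expect the main obstacle to be the sign bookkeeping rather than the structure of the argument. Three sources of signs must be shown to cancel: the reordering sign $(-1)^{j(n-j)}$ relating $U\wedge V_{\tau}$ to $V_{\tau}\wedge U$, the factor from $\star\star=(-1)^{k(n-k)}s$, and the signature factor $s$ hidden in both $\star vol(\sigma)=cs$ and in the top-degree inner products. The genuinely delicate ingredient is the metric-signature sign $s=\mathrm{sign}\det\langle\cdot,\cdot\rangle$, which is $+1$ in the Euclidean case but must be tracked carefully for Minkowski spacetime; confirming that it appears an even number of times and that all these contributions conspire to leave only the permutation sign $sgn(\rho\cup\tau)$ is the crux of the computation.
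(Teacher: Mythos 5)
Your duality computation is essentially correct, but the logical architecture is inverted relative to the paper's, and as phrased your argument is circular. In the paper, Theorem~\ref{main_theor} sits at the \emph{base} of the deductive tree: it is proved self-contained from the barycentric definition \eqref{whitney_coordinate}, by observing that both sides vanish when $x$ is spanned by $\tau$, reducing via multilinearity of barycentric coordinates to $x=v_k$ a vertex of $\rho$, and then testing both sides on the basis of simplicial $j$-multivectors $\bigwedge_{i=1}^{j}(v_{\sigma(i)}-v_k)$, where either a shared vertex forces $V_{\tau}\wedge U=0$ or both sides equal $j!$. The covector formula \eqref{vertexrep} of Theorem~\ref{covector_whitney_form} is then \emph{derived from} Theorem~\ref{main_theor} in the section on representational connections, by exactly the chain of identities you propose (evaluation as an inner product, naturality of $\star$ under the musical isomorphisms, the contraction identity $i_{V_{\tau}}Vol=\star(V_{\tau}^{\flat})$), read in the opposite direction. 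So if your ``already-established'' \eqref{vertexrep} refers to the main text, you are proving Theorem~\ref{main_theor} from a result whose proof uses Theorem~\ref{main_theor}. Your route is rescued only because the appendix gives an independent proof of \eqref{vertexrep} directly from \eqref{whitney_coordinate}, by induction on the form degree; you must anchor your argument there explicitly, or it does not stand.

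Granting that anchor, your sign bookkeeping checks out: the reordering sign $(-1)^{j(n-j)}$ cancels against $\star\star$ on $\Lambda^{n-j}$, and writing $Vol(\sigma)=c\,Vol$ with $\langle Vol,Vol\rangle=s$ one gets $\langle \star V_{\tau},U\rangle=\langle Vol,V_{\tau}\wedge U\rangle$, $\star vol(\sigma)=cs$, and $\langle Vol(\sigma),Vol(\sigma)\rangle=c^{2}s$, so both sides reduce to $\frac{sgn(\rho\cup\tau)}{cs}\frac{j!}{n!}\langle Vol,V_{\tau}\wedge U\rangle$ as you claim. Still, compare what each structure buys. The paper's direct proof of Theorem~\ref{main_theor} is elementary and metric-light, and it lets the simple vector-proxy verification carry the burden, with the duality chain then delivering \eqref{vertexrep} for free. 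Your inversion makes the appendix proof load-bearing, and that proof is the weakest link in the paper: the passage to the ansatz \eqref{linearwhitney} from vanishing, linearity, and antisymmetry is partly heuristic, and the inductive normalization computation is considerably heavier than the basis check you are replacing. So while your proposal is correct once routed through the appendix, it trades the paper's cleanest argument for its most delicate one without gaining generality.
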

\begin{proof}
When $x\in\tau$, both ${}^{j}w_{\rho}(x)[U]$ and the ratio of inner products vanish identically.

Now, we consider $x\in\rho$, and express it in terms of barycentric coordinates, $x=\sum {\lambda_k v_k}$. Then, by multilinearity, it suffices to establish the validity of the expression for $x=v_k$, where $v_k$ is a vertex of $\rho$. We must ensure that the two sides of the equation agree on the basis of $j$-multivectors. Thus, let $\alpha$ be a $(j-1)$-subsimplex with $j$ vertices that does not contain $v_k$, then take $U$ to be the $j$-volume multivector of $\alpha\cup v_k$, where the terms are in increasing order. More explicitly, take the following set as a basis for $U$:
\begin{equation*}
\left\{\bigwedge_{i=1}^{j}(v_{\sigma(i)}-v_k)|\sigma\in Y\right\},
\end{equation*}
where $Y$ is the set of all increasing functions that map $\{1,\ldots,j\}\rightarrow\{0,\ldots,\hat{k},\ldots,n\}$. If the image $v_{\sigma}\ne\rho$, then $v_{\sigma}\cup\tau\ne\emptyset$ and thus $V_{\tau}\wedge U = 0$. However, if $v_{\sigma}=\rho$, then
\begin{equation*}
sgn(\rho\cup\tau)\frac{j!}{n!}\frac{\left\langle Vol(\sigma),V_{\tau}\wedge U\right\rangle}{\langle Vol(\sigma), Vol(\sigma)\rangle}=sgn(\rho\cup\tau)j!\frac{\left\langle Vol(\sigma),sgn(\rho\cup\tau) Vol(\sigma)\right\rangle}{\langle Vol(\sigma), Vol(\sigma)\rangle}=j!
\end{equation*}
which agrees with ${}^{j}w_{\rho}(v_k)[U]=j!$
\end{proof}
Thus, \ref{main_theor} is the appropriate vector proxy of the barycentric representation of Whitney forms in any flat space.

\section{Representational Connections}
The vector formulation presented above leads naturally to the covector formulation showcased in \eqref{vertexrep}. In fact, we claim the ratio of inner products in \eqref{main_theor} is exactly equivalent to that differential form contracted over the $j$-vector $U=\bigwedge_{i=1}^{j}u_i$:
\[
{}^{j}w_{\rho}(x)[U]=sgn(\rho\cup\tau)\frac{j!}{n!}\frac{\left\langle Vol(\sigma),V_{\tau}\wedge U\right\rangle}{\langle Vol(\sigma), Vol(\sigma)\rangle}=\frac{sgn(\rho\cup\tau)}{\star vol(\sigma)}\frac{j!}{n!}\Bigl(\star\bigwedge_{v_k\in\tau}{(v_k - x)}^{\flat}\Bigr)[U]
\]
We now turn to a direct proof of this equation and Theorem \ref{covector_whitney_form}.
\begin{proof}
\begin{align*}
{}^{j}w_{\rho}(x)[U]&=sgn(\rho\cup\tau)\frac{j!}{n!}\frac{\left\langle Vol(\sigma),V_{\tau}\wedge U\right\rangle}{\langle Vol(\sigma), Vol(\sigma)\rangle}=sgn(\rho\cup\tau)\frac{j!}{n!}\frac{\left\langle vol(\sigma)^{\sharp},V_{\tau}\wedge U\right\rangle}{\left\langle vol(\sigma)^{\sharp},vol(\sigma)^{\sharp}\right\rangle}
\\
&=sgn(\rho\cup\tau)\frac{j!}{n!}\frac{\left\langle vol(\sigma)^{\sharp},V_{\tau}\wedge U\right\rangle}{\left\langle \star vol(\sigma),\star vol(\sigma)\right\rangle}=sgn(\rho\cup\tau)\frac{j!}{n!}\frac{\left\langle vol(\sigma),V_{\tau}^{\flat}\wedge U^{\flat}\right\rangle}{\left\langle \star vol(\sigma),\star vol(\sigma)\right\rangle}
\\
&=\frac{sgn(\rho\cup\tau)}{(\star vol(\sigma))^2}\frac{j!}{n!}\left\langle i_{V_{\tau}}(vol(\sigma)),U^{\flat}\right\rangle=\frac{sgn(\rho\cup\tau)}{\star vol(\sigma)}\frac{j!}{n!}\left\langle i_{V_{\tau}}Vol,U^{\flat}\right\rangle
\\
&=\frac{sgn(\rho\cup\tau)}{\star vol(\sigma)}\frac{j!}{n!}(i_{V_{\tau}}Vol)[U]=\frac{sgn(\rho\cup\tau)}{\star vol(\sigma)}\frac{j!}{n!}(\star (V_{\tau}^{\flat}))[U]
\\
&=\frac{sgn(\rho\cup\tau)}{\star vol(\sigma)}\frac{j!}{n!}\Bigl(\star\bigwedge_{v_k\in\tau}{(v_k - x)}^{\flat}\Bigr)[U]
\end{align*}
where $Vol$ represents the volume element of the space.
\end{proof}

Thus, the covector formulation represents the appropriate generalization of Whitney forms to flat, pseudo-Riemannian spaces.

\section{Explicit Examples of Generalized Whitney Forms}
In this section, we will demonstrate what these generalized Whitney forms look like in typical flat, pseudo-Riemannian spaces in comparison to their Riemannian counterparts.
\begin{itemize}
\item Consider a simplex $\sigma=[v_0,v_1,v_2]$ In Euclidean $n=2$ space. The Whitney $1$-form over $\rho=[v_0,v_1]$ is given by:
\[
{}^{1}w_{[v_0,v_1]} = \frac{sgn(v_0,v_1,v_2)}{\star vol(\sigma)}\frac{1!}{2!}(\star((v_2-x)^{\flat})) = \frac{\star(v_2-x)^{\flat}}{2 \star vol(\sigma)}.
\]
Figure \ref{fig:Euclidean2} portrays $({}^{1}w_{[v_0,v_1]})^{\sharp}$, or ${}^{1}w_{[v_0,v_1]}$ raised to a vector field.
\item If we modify the previous example to a Lorentzian $n=2$ manifold with signature $(-1,1)$, ${}^{1}w_{[v_0,v_1]}$ becomes:
\[
{}^{1}w_{[v_0,v_1]} = \frac{\star(v_2-x)^{\flat}}{2 \star vol(\sigma)},
\]
as only the definition of the Hodge star changes between the two spaces. Figure \ref{fig:Lorentzian1+1} portrays $({}^{1}w_{[v_0,v_1]})^{\sharp}$. Note that as a form, ${}^{1}w_{[v_0,v_1]}$ looks identical in both $\mathbb{R}^2$ and $\mathbb{R}^{1+1}$, as it always measures the circulation around the subsimplex.
\begin{figure}
\centering
\subcaptionbox{The Whitney form on $\mathbb{R}^2$ essentially captures the circulation around $\rho$.\label{fig:Euclidean2}}{\includegraphics[width=0.48\textwidth]{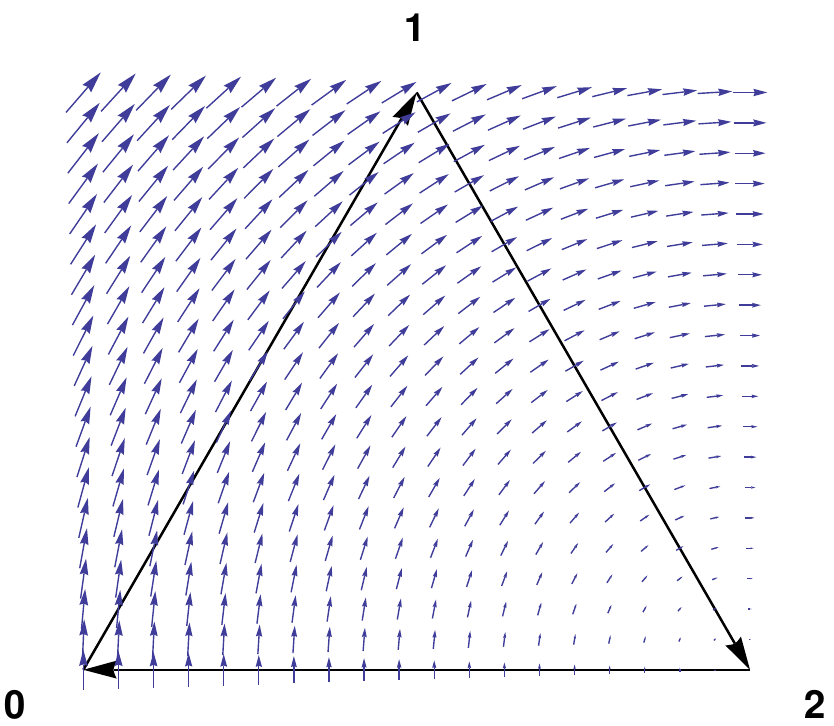}}\quad
\subcaptionbox{The Whitney form on $\mathbb{R}^{1+1}$ measures hyperbolic ``circulation", similar to a shearing of axes.\label{fig:Lorentzian1+1}}{\includegraphics[width=0.48\textwidth]{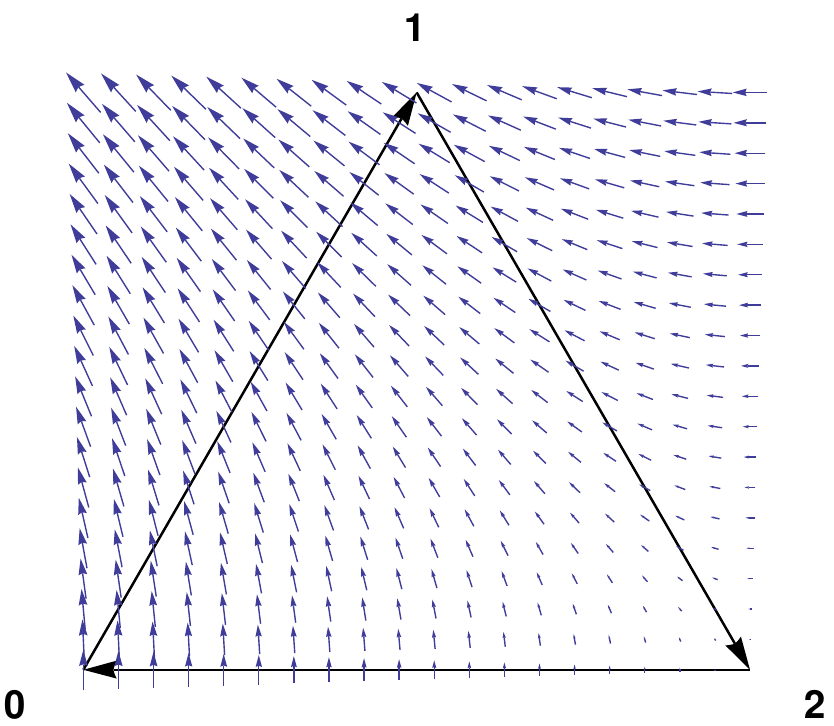}}
\caption{Visualization of the vector proxy field for the Whitney 1-form $1$-form ${}^1w_{[v_0,v_1]}$ on the $2$-simplex $[v_0,v_1,v_2]$ on a $2$-dimensional Euclidean manifold and a $(1+1)$-dimensional Lorentzian manifold. These images were produced with \textit{Mathematica}.}
\end{figure}
\item Take our manifold to be Euclidean $n=3$ space with $\sigma=[v_0,...v_3]$, we can write the Whitney $2$-form over $\rho=[v_2,v_1,v_3]$ as
\[
{}^{2}w_{[v_2,v_1,v_3]} = \frac{sgn(v_2,v_1,v_3,v_0)}{\star vol(\sigma)}\frac{2!}{3!}(\star((v_0-x)^{\flat})) = \frac{\star(v_0-x)^{\flat}}{3\star vol(\sigma)},
\]
which, upon converting to the standard vector notation, becomes
\[
{}^{2}\vec{w}_{[v_2,v_1,v_3]} = \frac{\vec{v_0}-\vec{x}}{3 |vol(\sigma)|}.
\]
In other words, the Whitney $2$-forms in $3$-space can be visualized as radial vector fields with the ``opposite" vertex as its source.
\item Now consider $n=4$ Minkowski spacetime with metric signature $(+---)$, with $\sigma=[v_0,...v_4]$ as the simplex of interest. Then, the Whitney $2$-form over $\rho=[v_1,v_0,v_4]$ is given by
\begin{align*}
{}^{2}w_{[v_1,v_0,v_4]} &= \frac{sgn(v_1,v_0,v_4,v_2,v_3)}{\star vol(\sigma)}\frac{2!}{4!}(\star((v_2-x)^{\flat}\wedge(v_3-x)^{\flat}))\\
& = -\frac{\star((v_2-x)^{\flat}\wedge(v_3-x)^{\flat})}{12 \star vol(\sigma)}.
\end{align*}
The Whitney $1$-form over $\rho=[v_3,v_1]$ in this same space is given by
\[
{}^{1}w_{[v_3,v_1]} = -\frac{\star((v_4-x)^{\flat}\wedge(v_2-x)^{\flat}\wedge(v_0-x)^{\flat})}{24 \star vol(\sigma)}.
\]
\end{itemize}

\section{Consequences}
We will now explore a few consequences of \eqref{vertexrep}. Expanding the wedge product in the formula, we obtain:
\begin{equation}
{}^{j}w_{\rho} = \frac{sgn(\rho\cup\tau)}{\star vol(\sigma)}\frac{j!}{n!}\star\Bigl(\bigwedge_{v_k\in\tau}{v_k}^{\flat}-x^{\flat}\wedge\sum_{v_k\in\tau}(-1)^{\alpha_{k}}\bigwedge_{\substack{v_l\in\tau \\ l\ne k}}{v_l}^{\flat}\Bigr),\label{linearwhitneywedgexpansion}
\end{equation}
where $\alpha_{k}$ is the number of transpositions required to bring $x^{\flat}$ to the front of the wedge product. If we take $\rho=[v_0,v_1,...v_j]$ as a standard vertex ordering for the subsimplex of interest, the above expression expands to:
\begin{equation}
\frac{1}{\star vol(\sigma)}\frac{j!}{n!}\star\Bigl(v_{j+1}^{\flat}\wedge...\wedge v_n^{\flat}-x^{\flat}\wedge\sum_{k=j+1}^{n}(-1)^{k-j-1}v_{j+1}^{\flat}\wedge v_{j+2}^{\flat}\wedge...\hat{v}^{\flat}_k\wedge...v_n^{\flat}\Bigr), \label{wedgexpansion}
\end{equation}
where, as usual, the hat indicates that the term is omitted. With respect to some origin, the first term is akin to the volume form of the complementary simplex $\tau$, and the second term represents the sum of volume forms enclosed by the position $1$-form $x^{\flat}$ and each of $\tau$'s subsimplices. Whitney forms can thus be thought of as the Hodge dual to the difference between these two volume forms, up to a scaling factor. This is easier to picture if one realizes that the product $\bigwedge_{v_k\in\tau}{(v_k - x)}^{\flat}$ is the $(n-j)$-volume form of the simplex formed by $\tau$'s vertices with the position $x$ as the origin. 

Additionally, this representation leads us to an interesting conclusion: the Hodge dual of a Whitney form is not a Whitney form on a simplex. Indeed, the dual Whitney forms only form an orthonormal basis on the space of the dual simplex $\star\sigma$ and its dual subsimplices:
\[
\int_{\rho}{}^{j}w_{\rho}=\int_{\star\rho}\star({}^{j}w_{\rho})=1,
\]
where the dual simplex is defined in terms of the geometric Hodge dual introduced in \cite{Har06}. This implies that the Hodge dual Whitney forms also form a basis, but only on the geometric Hodge dual of the original simplicial complex. Oddly enough, the formula for the Hodge dual Whitney form is slightly easier to interpret in this representation:
\begin{equation}
\star{}^{j}w_{\rho}= (\star\star)\frac{sgn(\rho\cup\tau)}{\star vol(\sigma)}\frac{j!}{n!}\Bigl(\bigwedge_{v_m\in\tau}{(v_m - x)}^{\flat}\Bigr),
\end{equation}
since $\star\star$ is the identity map, up to a sign that depends on the index of the metric, the dimension of the manifold, and the degree of the differential form.

We can thus picture the Hodge dual Whitney forms to be a difference of the complement's volume form from the sum of the complement's subsimplicial volume forms. Indeed, this formula is also more amenable to algebraic manipulation due to the Hodge star's partial cancellation on the RHS of the equation. For example, taking the exterior derivative of a dual Whitney form yields:
\[
d(\star{}^{j}w_{\rho})=0,
\]
as $d(x^{\flat})=0$. This implies that the $\delta {}^{j}w_{\rho}=0$ as well, where $\delta$ is the codifferential.

\begin{figure}[bp]
\centering
\subcaptionbox{Dispersion sets in quickly, visible even in the first period.\label{fig:FlatFull}}{\includegraphics[width=0.48\textwidth]{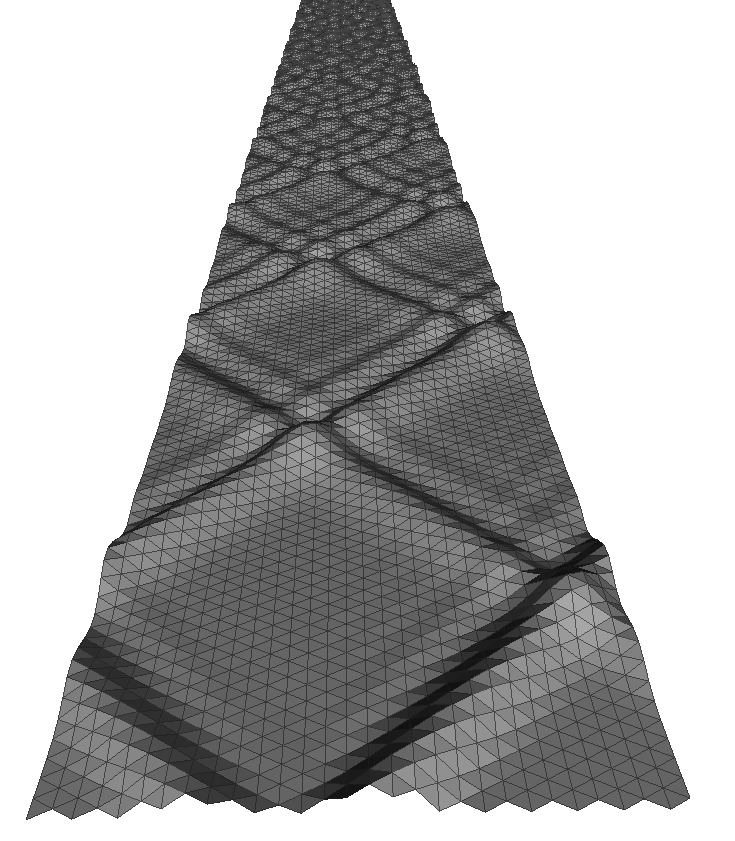}}\quad
\subcaptionbox{Towards the end of the simulation, the dispersion dominates the solution.\label{fig:FlatEnd}}{\includegraphics[width=0.48\textwidth]{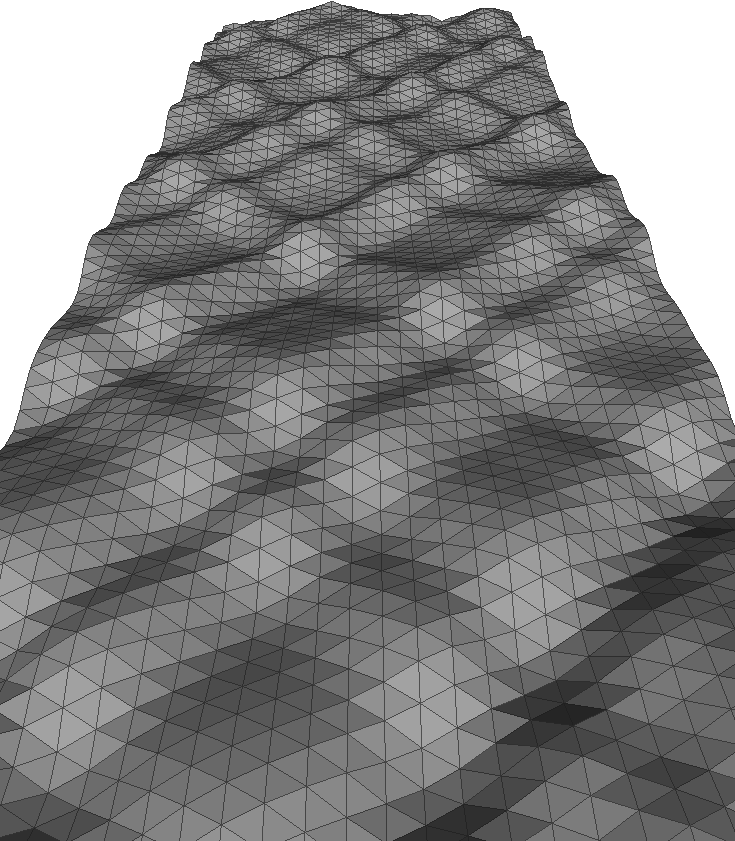}}
\caption{\label{fig:Flat}The $\mathbb{R}^{1+1}$ wave equation with a spatial, periodic boundary condition in the horizontal direction at low resolution (30 nodes per space-like slice).}
\end{figure}

\section{Application to $\mathbb{R}^{1+1}$ Wave Equation}
The coordinate-independent expressions for Whitney forms that were described above allows us to construct numerical approximations of the wave equation. More than being coordinate-independent, the algorithm is coordinate-free: it uses an abstract spacetime simplicial mesh endowed with Lorentzian distance information on the edges. In particular, the mesh on spacetime is not a tensor-product mesh, and the method does not rely on either a local or global embedding of the simplicial mesh. 

The method relies on a discretization of a multisymplectic characterization of the wave equation using a multisymplectic variational integrator~\cite{LeMaOrWe2003, MaPeShWe2001}. The variational characterization of the wave equation is in terms of the extrema of the following action functional,
\begin{align*}
S(A) &= \int_{A}{df \wedge \star df}  = \int_{A}{\langle df , df \rangle vol}.
\end{align*}
The advantage of discretizing the variational principle directly, as opposed to the equations of motion, is that the resulting numerical method is multisymplectic, and if the discretization respects the symmetries of the wave equation, then it preserves an associated multimomentum map via the discrete Noether's theorem. The discretization of the configuration bundle (fields over spacetime) using the spacetime Whitney forms that we introduced in this paper is particularly natural, since they are Lorentz-equivariant.

\begin{figure}[p]
\centering
\includegraphics[width=0.95\textwidth]{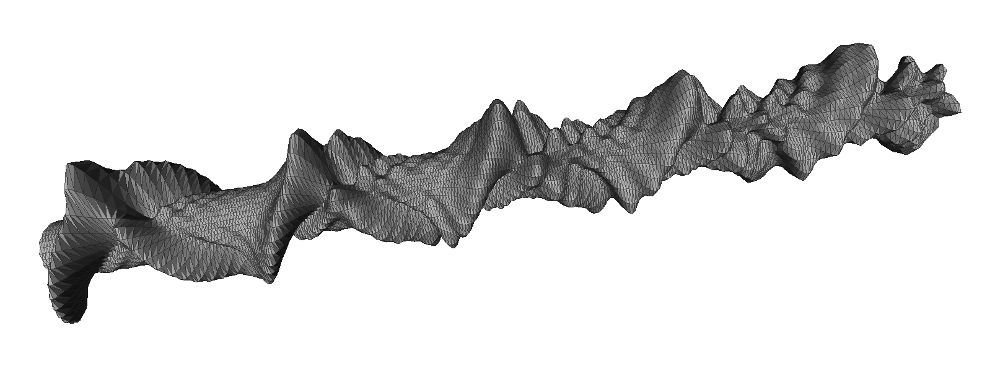}
\caption{\label{fig:Cyl1}Cylindrical visualization of the same data as in Figure~\ref{fig:FlatFull}. The cylinder's axis is the time direction; taking slices perpendicular to the time direction yields snapshots of the wave on the circle. The numerical results appear heavily distorted due to dispersion after just 2 periods.}
\end{figure}

\begin{figure}[p]
\centering
\subcaptionbox{Increasing the spatio-temporal resolution reduces dispersion.\label{fig:Cyl2}}{\includegraphics[width=0.48\textwidth]{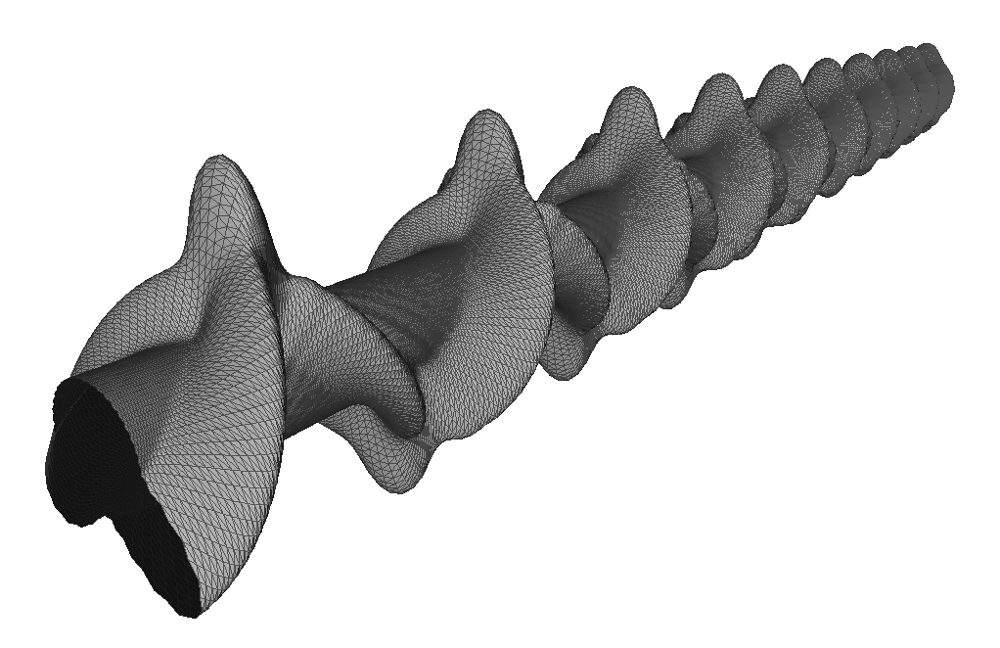}}\quad
\subcaptionbox{However, dispersion is still evident at later times (periods 8-10 are visible).\label{fig:Cyl3}}{\includegraphics[width=0.48\textwidth]{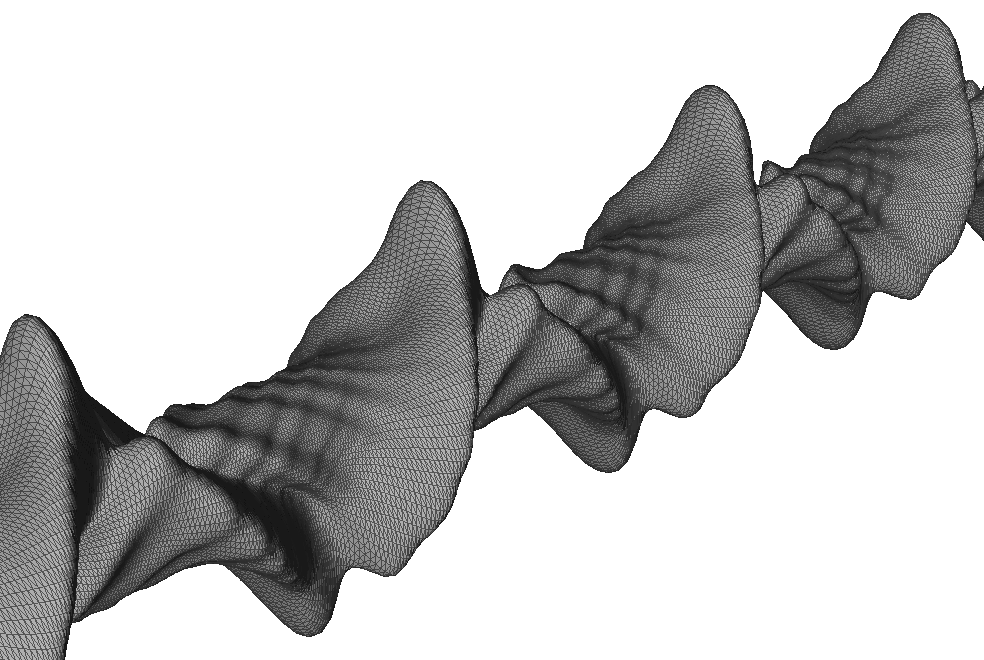}}
\caption{\label{fig:Cyl_high}Higher spatio-temporal resolution (80 nodes per space-like slice) reduces the effect of numerical dispersion in the simulation of the wave equation with the same initial data.}
\end{figure}

\begin{figure}[p]
\centering
\includegraphics[width=0.8\textwidth]{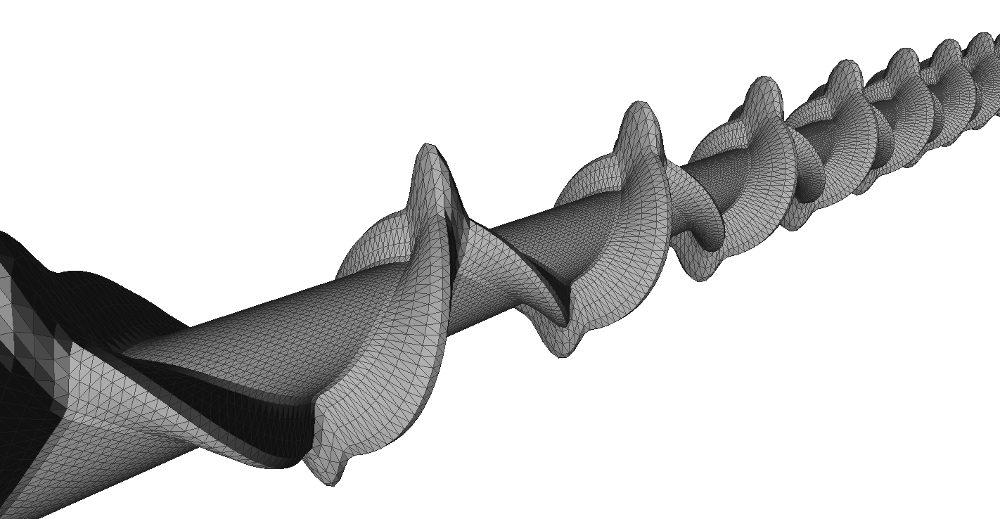}
\caption{\label{fig:Cyl4}An adapted spacetime mesh that is aligned along the integral lines, i.e., the light-cone, yields the exact solution, even at low resolution (40 nodes per space-like slice).}
\end{figure}

Instead of imposing periodic boundary conditions on a planar domain, the spacetime mesh is set up as an abstract simplicial complex that is topologically a cylinder. We present a simulation of the wave equation at low resolution in Figure~\ref{fig:Flat}, and note that the simulation exhibits numerical dispersion, but no apparent numerical dissipation. An alternative and natural cylindrical visualization of the simulation on cylindrical spacetime is given in Figure~\ref{fig:Cyl1}.

The numerical dispersion is noticeably reduced when the spatio-temporal resolution of the simulation is increased, as seen in Figure~\ref{fig:Cyl_high}. To give some indication of how the simulation performs if the mesh is well-adapted to the solution, we consider a low-resolution simulation with a spacetime mesh that is aligned with the light cones. As can be seen in Figure~\ref{fig:Cyl4}, this results in a perfect recovery of the solution. This suggests that the use of adaptive spacetime meshing may significantly improve the quality of the numerical results.

Admittedly, use of this technique is not necessary in this situation. A flat, cylindrical spacetime has a natural choice for a space-time foliation, which is exploited to produce the visualization. However, the figures presented here are a demonstration of the viability and applicability of the above generalization of Whitney forms to flat spacetimes. All the $3$-dimensional visualizations were produced with \textit{Meshlab}.

\section{Conclusion}
\label{sec:conc}

In this article we developed an appropriate generalization of Whitney forms to flat pseudo-Riemannian manifolds through the use of an alternative representation that is independent of, but still related to, the notion of barycentric coordinates. Furthermore, we demonstrated that this new formulation is completely equivalent to the barycentric Whitney forms. A consequence of the result is that we explicitly characterize the Hodge dual of the space of Whitney forms, and also obtain an alternative characterization in terms of vector proxies.

This new representation provides a means of addressing the discretization of dynamic partial differential equations from an intrinsically spacetime perspective, instead of using a spatial semi-discretization. In particular, it allows us to extend the techniques of finite-element exterior calculus (FEEC) to Minkowski spacetime, unlocking the possibility of exploiting the full relativistic symmetry of classical electromagnetism for FEEC based numerical simulations.

In addition, this representation raises the following questions:
\begin{enumerate}
\item what is the role of the dual mesh in relation to the physical variables?
\item what role does the codifferential play for the Hodge dual Whitney forms?
\item what is the appropriate generalization to curved manifolds?
\end{enumerate}
We aim to address these questions in future work. In addition, we will explore the application of the spacetime Whitney forms introduced in this paper to the spacetime discretization of classical electromagnetism using the multisymplectic variational integrator construction of \cite{LeMaOrWe2003}, with a particular emphasis on the appropriate discretization of the gauge symmetry group for electromagnetism. We also intend to combine the methods developed in this paper with the approach in \cite{RaBo2009}, in order to construct higher-order spacetime Whitney forms.

\section{Acknowledgments}
   \label{sec:ack}

This work was supported in part by NSF Grants CMMI-1029445, DMS-1065972, CMMI-1334759, and NSF CAREER Award DMS-1010687.

\section{Appendix: Direct Proof of the Covector Formulation}
In this section, we will present the more direct (but less intuitive) proof of equivalence of \eqref{vertexrep} to the standard barycentric formulation of Whitney forms, showing that it is the appropriate generalization to flat, pseudo-Riemannian spaces.
\begin{proof}
We will consider a flat $n$-dimensional manifold endowed with a pseudo-Riemannian metric with a fixed signature that contains a $n$-simplex $\sigma$. Then, from the expression \eqref{whitney_coordinate} for a Whitney $j$-form over a $j$-subsimplex $\rho$, and the fact that $\lambda_j(v_i)=\delta_{ij}$, it follows that:
\[
{}^{j}w_{\rho}(v_i)=0,\qquad \text{for all } v_i \in \tau,
\]
where $\tau=\sigma\smallsetminus\rho$. Note that by linearity of the barycentric coordinate chart, this implies that ${}^{j}w_{\rho}(x)=0$, for all $x$ spanned by $\tau$. The linearity of the barycentric coordinate chart also implies that Whitney forms are linear in the position vector $x$. Together, these two conditions imply that Whitney forms must be composed of the wedge product of difference $1$-forms $(v_i-x)^{\flat}$, $v_i\in\tau$. Furthermore, Whitney forms are antisymmetric under vertex exchange. Thus, we are lead to a formula of the form:
\begin{equation}
{}^{j}w_{\rho} = C_{\sigma,j}sgn(\rho\cup\tau)\Biggl(\star\bigwedge_{v_k\in\tau}{(v_k - x)}^{\flat}\Biggr).\label{linearwhitney}
\end{equation}
The $\star$ is the usual Hodge star, and it is required to convert the $(k-j)$-form given by the wedge product into a $j$-form, and $sgn(\rho\cup\tau)$ is required to maintain consistency with the arbitrary ordering of the vertices in $\rho$ and $\tau$. The constant $C_{\sigma,j}$ is used to satisfy the normalization condition:
\begin{equation}
\int_{\rho}{}^{j}w_{\rho} = 1.
\end{equation}
In particular, we will show by induction on $j$, the degree of the differential form, that $C_{\sigma,j}=\frac{1}{\star vol(\sigma)}\frac{j!}{n!}$. Starting with the first base case $j=0$ and $\rho=[v_0]$, we find that
\begin{align*}
\int_{[v_0]}{}^{0}w_{[v_0]} &= {}^{0}w_{[v_0]}(v_0) = \frac{sgn(\sigma)}{\star vol(\sigma)}\frac{0!}{n!}\Biggl(\star\bigwedge_{m=1}^{n}{(v_m - v_0)}^{\flat}\Biggr)
\\
&= \frac{1}{n! \star vol(\sigma)}\star((v_1-v_0)^{\flat}\wedge...\wedge(v_n-v_0)^{\flat})=1.
\end{align*}
Thus, equation \eqref{vertexrep} for $j=0$ corresponds exactly to barycentric coordinates. Next, we consider the second base case $j=1$. Taking $\rho=[v_0,v_1]$, the LHS of the normalization condition becomes:
\begin{align*}
&\int_{[v_0,v_1]}{}^{1}w_{[v_0,v_1]}\\
&\qquad= \frac{1}{n!\star vol(\sigma)}\int_{[v_0,v_1]}\Biggl(\star\bigwedge_{m=2}^{n}{(v_m - x)}^{\flat}\Biggr)
\\
&\qquad= \frac{1}{n!\star vol(\sigma)}\int_{[v_0,v_1]}\star\biggl(v_{2}^{\flat}\wedge...\wedge v_n^{\flat}-x^{\flat}\wedge\sum_{m=2}^{n}(-1)^{n-m}v_2^{\flat}\wedge v_3^{\flat}\wedge...\widehat{{v}^{\flat}}_m\wedge...v_n^{\flat})\biggr).
\intertext{Now, we exploit the translation invariance of the representation and set $v_2$ as our origin. Note that translation invariance is not required to complete the proof, but leads to the most geometrically simple result. The expression above then simplifies to:}
&\qquad=  \frac{-1}{n!\star vol(\sigma)}\int_{[v_0-v_2,v_1-v_2]}\star((x-v_2)^{\flat}\wedge(v_3-v_2)^{\flat}\wedge...(v_n-v_2)^{\flat})),
\intertext{as only the term without $v_2^{\flat}$ survives. Since the integration is over the $1$-simplex $[v_0-v_2,v_1-v_2]$, we can parametrize our path by $x(t) = (v_1-v_0)t+(v_0-v_2)$ with $x'(t) = (v_1 - v_0)$, where $t\in[0,1]$. The expression above then becomes:}
&\qquad = \frac{-1}{n!\star vol(\sigma)}\int_{0}^{1}\biggl\langle\star(((v_1-v_0)t+(v_0-v_2))^{\flat}\wedge(v_3-v_2)^{\flat}\wedge...(v_n-v_2)^{\flat}),(v_1-v_0)^{\flat}\biggr\rangle dt 
\\
&\qquad =\frac{-1}{n!\star vol(\sigma)}\left\langle\star\biggl(\biggl(\frac{1}{2}(v_1-v_0)+(v_0-v_2)\biggr)^{\flat}\wedge(v_3-v_2)^{\flat}\wedge...(v_n-v_2)^{\flat}\biggr),(v_1-v_0)^{\flat}\right\rangle
\\
&\qquad = \frac{-1}{n!\star vol(\sigma)}\star\biggl((v_1-v_0)^{\flat}\wedge\biggl(\frac{1}{2}(v_1-v_0)+(v_0-v_2)\biggr)^{\flat}\wedge(v_3-v_2)^{\flat}\wedge...(v_n-v_2)^{\flat}\biggr)
\\
&\qquad = \frac{1}{n!\star vol(\sigma)}\star\bigl((v_0-v_2)^{\flat}\wedge(v_1-v_2)^{\flat}\wedge(v_3-v_2)^{\flat}\wedge...(v_n-v_2)^{\flat}\bigr) = 1.
\end{align*}
Therefore, equation \eqref{vertexrep} for $j=1$ corresponds exactly to Whitney $1$-forms.

Next, as our inductive hypothesis, we will assume that the barycentric Whitney $l$-forms correspond to the $j=l$ case as shown in \eqref{vertexrep}.

Then, for the Whitney $(l+1)$-forms over $\rho=[v_0,...v_{l+1}]$, we can use the following well-known decomposition from the usual formulation in barycentric coordinates:
\begin{multline*}
{}^{l+1}w_{[v_0,v_1,...v_l,v_{l+1}]} = (l+1)\frac{{}^{l}w_{[v_0,v_1,...v_l]}\wedge{}^1w_{[v_l,v_{l+1}]}}{{}^0w_{[v_l]}}
\\
=\frac{1}{\star vol(\sigma)}\frac{(l+1)!}{n!}\frac{\bigl(\star\bigwedge_{i=l+1}^n(v_i-x)^{\flat}\bigr)\wedge\Bigl(\star\bigwedge_{\substack{m=0\\m \ne l,l+1}}^n(v_m-x)^{\flat}\Bigr)}{\star\bigwedge_{\substack{k=0\\k \ne l}}^n(v_k-x)^{\flat}}.
\end{multline*}
Now, we must show that this decomposition matches what \eqref{vertexrep} yields for the LHS. Let ${}^{n-l-1}\alpha=(v_{l+2}-x)^{\flat}\wedge\dots\wedge(v_n-x)^{\flat}$, ${}^{l}\beta=(v_0-x)^{\flat}\wedge\dots\wedge(v_{l-1}-x)^{\flat}$, and ${}^{1}\gamma=(v_{l}-x)^{\flat}$. Then the RHS above becomes:
\[
\frac{1}{\star vol(\sigma)}\frac{(l+1)!}{n!}\frac{(\star(\gamma\wedge\alpha)\wedge(\star(\beta\wedge\alpha))}{\star(\beta\wedge\gamma\wedge\alpha)}.
\]
Applying the identity $i_{v^{\sharp}}u=(\star\star)\star(\star u\wedge v)$ (see Lemma 8.2.1 of \cite{Hi2003}), the fraction above turns into:
\begin{align*}
\frac{1}{\star vol(\sigma)}\frac{(l+1)!}{n!}\frac{i_{\gamma^{\sharp}}(\star\alpha)\wedge i_{\beta^{\sharp}}(\star\alpha)}{i_{(\beta\wedge\gamma)^{\sharp}}(\star\alpha)}&=\frac{1}{\star vol(\sigma)}\frac{(l+1)!}{n!}\star\alpha
\\
&=\frac{1}{\star vol(\sigma)}\frac{(l+1)!}{n!}\Bigl(\star((v_{l+2}-x)^{\flat}\wedge\dots\wedge(v_n-x)^{\flat})\Bigr)\\
&=\frac{1}{\star vol(\sigma)}\frac{(l+1)!}{n!}\Bigl(\star\bigwedge_{i=l+2}^{n}(v_i-x)^{\flat}\Bigr).
\end{align*}
\end{proof}

\bibliographystyle{abbrv}
\bibliography{jjm}

\end{document}